 \newtheorem{thm}{Theorem}[section]
 \newtheorem{lem}[thm]{Lemma}
 \theoremstyle{definition}
 \theoremstyle{remark}
 \numberwithin{equation}{section}
\begin{document}

\title[A remark on the capability of finite $p$-groups]
 {A remark on the capability of finite $p$-groups}

\author[P. Niroomand]{Peyman Niroomand}
\author{Mohsen Parvizi}
\address{School of Mathematics and Computer Science\\
Damghan University of Basic Sciences, Damghan, Iran}
\email{p$\_$niroomand@yahoo.com} \email{parvizi@dubs.ac.ir}

\thanks{\textit{Mathematics Subject Classification 2010.} 20F2.}


\keywords{Finite $p$-groups, capable groups}

\date{\today}


\begin{abstract}
In this paper we classify all capable finite $p$-groups with derived
subgroup of order $p$ and $G/G'$ of rank $n-1$.
\end{abstract}

\maketitle

\section{Motivation}
Recall the famous question of P. Hall about a given group $G$ ``Can
we decide that $G\cong H/Z(H)$ for a group $H$?" That is an
interesting question but unfortunately finding necessary and
sufficient conditions for a group $G$ to be isomorphic to $H/Z(H)$
is not easy. If it is possible such group is called capable
following to \cite{ha}. It is known that which of finitely generated
abelian groups are capable $($see \cite{ba} for more information$)$.
Also, among the non-abelian groups, the capability of  $p$-groups
took special attention, although the structure of all non-abelian
$p$-groups have not been characterized but the results of
\cite{bac,ma} is determined the capability of two generator
2-groups.

In the preset paper we are interesting to classify the capable group
where $G'$ is of order $p$ and $G/G'$ of rank $n-1$.

\section{preliminaries}
This section contains some definitions, theorems and lemmas which are used in main results.
 We assume that the notion of Schur multiplier is known, also we use the notion of epicenter
 and exterior center of a group without defining them. Epicenter of a group $G$ which is
  denoted by $Z^*(G)$, was introduced by  Beyl,  Felgner, and  Schmid in
  \cite{bey}.They showed a necessary and sufficient condition for a
  g roup to be capable is having trivial epicenter.

\begin{thm}$($See \cite[Theorem 2.5.10]{kar}$)$ \label{t1}
Let $G$ be a finite group and $N$ be a normal subgroup of $G$. Then
$N\subseteq Z^*(G)$ if and only if the natural map
$M(G)\longrightarrow M(G/N)$ is monomorphism.
\end{thm}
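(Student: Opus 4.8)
The plan is to pass to a fixed free presentation $1\to R\to F\xrightarrow{\pi}G\to 1$ and argue through Hopf's formula. Put $S=\pi^{-1}(N)$, so that $G/N\cong F/S$; since $R\subseteq S$ we have $[F,R]\subseteq[F,S]$ and $R\cap F'\subseteq S\cap F'$, and under the identifications $M(G)\cong(R\cap F')/[F,R]$, $M(G/N)\cong(S\cap F')/[F,S]$ the natural map $M(G)\to M(G/N)$ is the homomorphism $r[F,R]\mapsto r[F,S]$ induced by the inclusion $R\cap F'\hookrightarrow S\cap F'$. Its kernel is $\bigl((R\cap F')\cap[F,S]\bigr)/[F,R]$, so the natural map is a monomorphism precisely when $(R\cap F')\cap[F,S]=[F,R]$.

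For the epicenter I would use the Beyl--Felgner--Schmid description: for the chosen presentation $Z^*(G)$ is the image in $G=F/R$ of $Z(F/[F,R])$, hence $N\subseteq Z^*(G)$ if and only if $S/[F,R]\subseteq Z(F/[F,R])$, that is, if and only if $[F,S]\subseteq[F,R]$, which (again because $R\subseteq S$) is the same as $[F,S]=[F,R]$. I would record at once the consequence that $N\subseteq Z^*(G)$ forces $[G,N]=1$, i.e.\ $N\subseteq Z(G)$; this holds automatically once $N\subseteq Z^*(G)$, and it is precisely the extra information the converse implication will use.

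It then remains to prove $[F,S]=[F,R]\iff(R\cap F')\cap[F,S]=[F,R]$. The forward implication is immediate, since $[F,R]\subseteq R\cap F'$. For the converse --- which I expect to be the only genuine step --- note that $[F,S]\subseteq F'$ always and, using that $N$ is central, that $[F,S]$ maps onto $[G,N]=1$ in $G$, so $[F,S]\subseteq R$; hence $[F,S]\subseteq R\cap F'$, whence $(R\cap F')\cap[F,S]=[F,S]$ and the hypothesis becomes $[F,S]=[F,R]$. A presentation-free route to the same point is Ganea's extension of the five-term exact sequence: for central $N$ the sequence $N\otimes G^{\mathrm{ab}}\xrightarrow{\lambda}M(G)\to M(G/N)\to N\to G^{\mathrm{ab}}\to(G/N)^{\mathrm{ab}}\to 0$ is exact, so $M(G)\to M(G/N)$ is a monomorphism exactly when the Ganea map $\lambda$ vanishes, and $\lambda=0$ is in turn equivalent to $N\subseteq Z^*(G)$. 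The main obstacle is thus organizational rather than computational: ensuring that the centrality of $N$, which is concealed inside ``$N\subseteq Z^*(G)$'', is available when one runs the implication from injectivity back to membership in the epicenter.
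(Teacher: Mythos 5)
The paper offers no proof of this statement; it is quoted from Karpilovsky's monograph, so there is nothing in-paper to compare against and your attempt must be judged on its own. Most of your reduction is the standard argument and is sound: Hopf's formula does identify the kernel of the natural map with $\bigl((R\cap F')\cap[F,S]\bigr)/[F,R]$; the Beyl--Felgner--Schmid description of $Z^*(G)$ as the image of $Z(F/[F,R])$ does translate $N\subseteq Z^*(G)$ into $[F,S]=[F,R]$ (using that $R/[F,R]$ is central in $F/[F,R]$, so $S\subseteq \pi^{-1}(Z^*(G))$ forces $S/[F,R]\subseteq Z(F/[F,R])$); and the implication from $N\subseteq Z^*(G)$ to injectivity is complete.

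The converse, however, contains a genuine gap, and it is not merely ``organizational'' as you suggest. To get $[F,S]\subseteq R$ you invoke $[G,N]=1$, i.e.\ the centrality of $N$ --- but in that direction the hypothesis is injectivity of $\mathcal{M}(G)\to\mathcal{M}(G/N)$, not membership in $Z^*(G)$, so centrality is simply not available, and it cannot be extracted from injectivity. Concretely, take $G=S_3$ and $N=A_3$: both $\mathcal{M}(S_3)$ and $\mathcal{M}(S_3/A_3)=\mathcal{M}(\mathbb{Z}_2)$ are trivial, so the natural map is a monomorphism, yet $A_3\not\subseteq Z^*(S_3)\subseteq Z(S_3)=1$. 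Hence the equivalence as transcribed for an arbitrary normal $N$ is false and no argument can close the gap; the hypothesis must be that $N$ is \emph{central} (which is the only way Theorem~\ref{t1} is ever applied in this paper, namely to subgroups of $Z(G)\cap G'$ or to $G'\subseteq Z(G)$). With that hypothesis restored, your proof --- including the Ganea-sequence alternative, whose exactness likewise requires $N$ central --- goes through.
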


\begin{thm}$($See \cite[Theorem 2.2.10]{kar}$)$\label{t2}
Let $A$ and $B$ be finite groups then
\[\mathcal{M}(A\times B)\cong \mathcal{M}(A)\oplus \mathcal{M}(B)\oplus \frac{A}{A'}\otimes \frac{B}{B'}.\]
\end{thm}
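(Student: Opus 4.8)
The plan is to deduce the formula from the Künneth theorem in the homology of groups, using the standard identifications $H_1(G,\mathbb{Z})\cong G/G'$ and $H_2(G,\mathbb{Z})\cong\mathcal{M}(G)$ (Hopf). First I would recall that $H_*(G,\mathbb{Z})$ is computed by $\mathbb{Z}\otimes_{\mathbb{Z}G}P_*$ for a free resolution $P_*$ of $\mathbb{Z}$ over $\mathbb{Z}G$, that the resulting complexes consist of free abelian groups, and that for $G=A\times B$ the relation $\mathbb{Z}[A\times B]\cong\mathbb{Z}A\otimes_{\mathbb{Z}}\mathbb{Z}B$ makes the tensor product over $\mathbb{Z}$ of a resolution for $A$ with one for $B$ a resolution for $A\times B$. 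The algebraic Künneth theorem then gives, for every $n$, a short exact sequence
\[0\lo\bigoplus_{i+j=n}H_i(A,\mathbb{Z})\otimes H_j(B,\mathbb{Z})\lo H_n(A\times B,\mathbb{Z})\lo\bigoplus_{i+j=n-1}\mathrm{Tor}_1^{\mathbb{Z}}\!\big(H_i(A,\mathbb{Z}),H_j(B,\mathbb{Z})\big)\lo 0,\]
in which the left-hand arrow is the (natural) homology cross product.

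Next I would specialize to $n=2$. The left-hand term is $\big(H_0(A)\otimes H_2(B)\big)\oplus\big(H_1(A)\otimes H_1(B)\big)\oplus\big(H_2(A)\otimes H_0(B)\big)$, which, since $H_0(A)\cong H_0(B)\cong\mathbb{Z}$, equals $\mathcal{M}(B)\oplus(A/A'\otimes B/B')\oplus\mathcal{M}(A)$. The right-hand term only involves the pairs $(i,j)=(0,1)$ and $(1,0)$, and $\mathrm{Tor}_1^{\mathbb{Z}}(\mathbb{Z},-)=0$ because $\mathbb{Z}$ is $\mathbb{Z}$-free; so that term vanishes. Hence the cross product is an isomorphism, which is exactly the asserted decomposition of $\mathcal{M}(A\times B)$. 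Note that this argument does not actually use finiteness of $A$ or $B$.

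If a proof internal to group theory is wanted, I would instead construct the isomorphism directly. The coordinate projections $A\times B\to A$ and $A\times B\to B$ induce a map $\mathcal{M}(A\times B)\to\mathcal{M}(A)\oplus\mathcal{M}(B)$, split by the two coordinate embeddings, so $\mathcal{M}(A\times B)=\mathcal{M}(A)\oplus\mathcal{M}(B)\oplus K$ with $K$ the kernel of the projection map. One then identifies $K$ with $A/A'\otimes B/B'$: fixing a free presentation $1\to R\to F\to A\times B\to 1$, the Hopf formula $\mathcal{M}(A\times B)\cong(R\cap[F,F])/[F,R]$ lets one realize $K$ as the subgroup generated by the commutators $[\tilde a,\tilde b]$ with $a\in A$, $b\in B$ and $\tilde a,\tilde b$ lifts to $F$; these are central modulo $[F,R]$ and biadditive modulo the images of $A'$ and $B'$, which yields a surjection $A/A'\otimes B/B'\to K$, and an order count (for $A,B$ finite) or an explicit reverse assignment shows it is injective.

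The genuinely routine parts are the Künneth bookkeeping in the first approach; the one place demanding care, should one take the elementary route, is establishing that the commutator assignment $A/A'\otimes B/B'\to\mathcal{M}(A\times B)$ is well defined and injective rather than merely surjective.
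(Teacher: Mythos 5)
Your proposal is correct, but note that the paper does not prove this statement at all: it is quoted verbatim as a preliminary from Karpilovsky's monograph (Theorem 2.2.10 there), so there is no internal proof to compare against. Your Künneth argument is the standard modern derivation and it is complete: the tensor product of free resolutions for $A$ and $B$ over $\mathbb{Z}A$ and $\mathbb{Z}B$ is a free resolution for $A\times B$ via $\mathbb{Z}[A\times B]\cong \mathbb{Z}A\otimes_{\mathbb{Z}}\mathbb{Z}B$, the complexes $\mathbb{Z}\otimes_{\mathbb{Z}G}P_*$ are free abelian, and in degree $2$ the Tor term involves only $\mathrm{Tor}_1^{\mathbb{Z}}(H_0,H_1)$ and $\mathrm{Tor}_1^{\mathbb{Z}}(H_1,H_0)$, which vanish since $H_0\cong\mathbb{Z}$ is free; hence the cross product is an isomorphism and Hopf's identification $H_2(G,\mathbb{Z})\cong\mathcal{M}(G)$ gives the claim. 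The only point worth flagging is that for finite groups $\mathcal{M}(G)$ is classically defined as $H^2(G,\mathbb{C}^{*})$, so one should invoke (or at least mention) the isomorphism $H^2(G,\mathbb{C}^{*})\cong H_2(G,\mathbb{Z})$ for finite $G$ before applying Hopf; granted that, your observation that finiteness is otherwise unnecessary is accurate. Your alternative elementary sketch via the split projections and the Hopf formula is also the route closer in spirit to Karpilovsky's own treatment, and you correctly isolate the delicate step there, namely that the commutator pairing $A/A'\otimes B/B'\to\mathcal{M}(A\times B)$ is well defined and injective, not merely surjective.
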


\begin{thm}$($See \cite[Theorem 2.5.6 (i)]{kar}$)$\label{t3}
Let $G$ be a finite group and $N$ be a central subgroup of it, then
the following sequence is exact
\[\mathcal{M}(G)\longrightarrow \mathcal{M}(\frac{G}{N}) \longrightarrow N\cap G'\longrightarrow 1. \]
\end{thm}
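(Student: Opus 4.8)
The plan is to compute all three terms by Hopf's formula. Write $G=F/R$ with $F$ free, and pick $S$ with $R\le S\le F$ and $S/R=N$, so that $G/N\cong F/S$. Hopf's formula gives $\mathcal{M}(G)\cong (R\cap F')/[F,R]$ and $\mathcal{M}(G/N)\cong (S\cap F')/[F,S]$, and the first map of the sequence is the one induced by the inclusion $R\cap F'\hookrightarrow S\cap F'$ (legitimate because $R\le S$ forces $[F,R]\le[F,S]$). The hypothesis that $N=S/R$ is central in $G=F/R$ says exactly that $[F,S]\le R$; combined with the trivial $[F,S]\le F'$ this yields the key inclusion $[F,S]\le R\cap F'$, which is the only point where centrality enters.

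Next I would identify the right-hand term. Since $R\le S$ one checks $S\cap F'R=(S\cap F')R$, so
\[
N\cap G'=\frac{S}{R}\cap\frac{F'R}{R}=\frac{(S\cap F')R}{R}\cong\frac{S\cap F'}{R\cap F'},
\]
and under this identification the map $\mathcal{M}(G/N)\to N\cap G'$ is just the canonical projection $(S\cap F')/[F,S]\twoheadrightarrow(S\cap F')/(R\cap F')$; it is well defined precisely because $[F,S]\le R\cap F'$, and it is obviously onto, giving exactness at $N\cap G'$. Its kernel is $(R\cap F')/[F,S]$.

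Finally, exactness at $\mathcal{M}(G/N)$: the image of $\mathcal{M}(G)\to\mathcal{M}(G/N)$ is $(R\cap F')[F,S]/[F,S]$, and since $[F,S]\le R\cap F'$ this equals $(R\cap F')/[F,S]$ --- exactly the kernel just computed. Hence the sequence is exact, and as $\mathcal{M}(-)=H_2(-,\mathbb{Z})$ this is the assertion. (Conceptually it is the tail $H_2(G)\to H_2(G/N)\to N/[G,N]\to H_1(G)\to H_1(G/N)\to 0$ of the five-term homology sequence of $1\to N\to G\to G/N\to 1$: centrality makes $N/[G,N]=N$, the image of the penultimate arrow is $G'N/G'\cong N/(N\cap G')$, so the image of $H_2(G/N)\to N$ is $N\cap G'$; the Hopf-formula computation above is a hands-on version of this.) I expect no real obstacle beyond organisational care in tracking the four subgroups $[F,R]\le[F,S]\le R\cap F'\le S\cap F'$ and the relevant isomorphism-theorem identifications.
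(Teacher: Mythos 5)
Your argument is correct, but there is nothing in the paper to compare it with: the paper states this as a quoted result from Karpilovsky (\cite[Theorem 2.5.6(i)]{kar}) and gives no proof. Your Hopf-formula derivation is a complete and standard proof of the quoted statement. The three key checks all hold: centrality of $N=S/R$ in $G=F/R$ is exactly $[F,S]\le R$, hence $[F,S]\le R\cap F'$; the modular law (valid since $R\le S$) gives $S\cap F'R=(S\cap F')R$ and so $N\cap G'\cong (S\cap F')/(R\cap F')$ via the second isomorphism theorem; and the image of $(R\cap F')/[F,R]\to (S\cap F')/[F,S]$ is $(R\cap F')/[F,S]$, which is precisely the kernel of the projection onto $(S\cap F')/(R\cap F')$. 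Note also that finiteness of $G$ is never used, and that your identification of the first arrow as the map induced by the inclusion $R\cap F'\hookrightarrow S\cap F'$ is exactly what the paper needs: Lemma \ref{l1} combines this sequence with Theorem \ref{t1}, which concerns the \emph{natural} map $\mathcal{M}(G)\to\mathcal{M}(G/N)$, and the inclusion-induced map on Hopf quotients is that natural map. Your closing remark correctly places the result as the tail of the five-term homology sequence of $1\to N\to G\to G/N\to 1$ with $[G,N]=1$.
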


The following lemma is a conclusion of Theorem \ref{t3} and used in
the proof of the main theorem.

\begin{lem}\label{l1}
Let $G$ be a finite $p$-group and $N\subseteq Z(G)\cap G'$ be a
subgroup of order $p$. If $|\mathcal{M}(G/N)|=p~|\mathcal{M}(G)|$
then $N\subseteq Z^*(G)$.
\end{lem}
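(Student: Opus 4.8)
The plan is to reduce the statement to the capability criterion of Theorem \ref{t1} and then finish with a short order count based on the exact sequence of Theorem \ref{t3}. By Theorem \ref{t1} applied to the normal (indeed central) subgroup $N$, proving $N\subseteq Z^*(G)$ is \emph{equivalent} to proving that the natural map $\mathcal{M}(G)\longrightarrow \mathcal{M}(G/N)$ is a monomorphism. Since $G$ is a finite $p$-group, every Schur multiplier in sight is a finite group, so injectivity of this map can be detected purely by comparing the order of $\mathcal{M}(G)$ with the order of the image of $\mathcal{M}(G)$ in $\mathcal{M}(G/N)$.

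Next I would invoke Theorem \ref{t3} with the central subgroup $N$, which supplies the exact sequence
\[
\mathcal{M}(G)\longrightarrow \mathcal{M}(G/N)\longrightarrow N\cap G'\longrightarrow 1 .
\]
Because $N\subseteq G'$ by hypothesis, we have $N\cap G'=N$, a group of order $p$. Exactness at $\mathcal{M}(G/N)$, together with the surjectivity onto $N\cap G'$ expressed by the final arrow, shows that the image of $\mathcal{M}(G)\longrightarrow \mathcal{M}(G/N)$ is exactly the kernel of $\mathcal{M}(G/N)\longrightarrow N$, hence has index $p$ in $\mathcal{M}(G/N)$; in other words the image has order $|\mathcal{M}(G/N)|/p$.

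Now I bring in the numerical hypothesis $|\mathcal{M}(G/N)|=p\,|\mathcal{M}(G)|$, which forces the image of $\mathcal{M}(G)\longrightarrow\mathcal{M}(G/N)$ to have order exactly $|\mathcal{M}(G)|$. Since this image is a homomorphic image of the finite group $\mathcal{M}(G)$ and has the same cardinality as $\mathcal{M}(G)$, the map $\mathcal{M}(G)\longrightarrow\mathcal{M}(G/N)$ has trivial kernel, i.e.\ it is a monomorphism. Applying Theorem \ref{t1} in the other direction then yields $N\subseteq Z^*(G)$, completing the proof. There is no genuinely hard step here: the lemma is a bookkeeping consequence of Theorems \ref{t1} and \ref{t3}, and the only points needing a moment's attention are the identification $N\cap G'=N$ (this is precisely where the assumption $N\subseteq G'$ is used; without it the index could collapse to $1$) and the remark that finiteness of $\mathcal{M}(G)$ is what licenses the passage from ``equal orders'' to ``injective''.
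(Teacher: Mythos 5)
Your proof is correct and follows essentially the same route as the paper's: both reduce via Theorem \ref{t1} to showing that $\mathcal{M}(G)\longrightarrow\mathcal{M}(G/N)$ is injective, and both deduce this from the exact sequence of Theorem \ref{t3} together with the order count $|\ker\beta|=|\mathcal{M}(G/N)|/p=|\mathcal{M}(G)|$. Your write-up is merely a bit more explicit about the surjectivity onto $N\cap G'=N$ and about finiteness licensing the passage from equal orders to injectivity.
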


\begin{proof}
Using Theorems \ref{t1} and \ref{t3} it is enough to show that $\mathcal{M}(G)\longrightarrow \mathcal{M}(G/N)$
has trivial kernel. Let $\alpha$ and $\beta$ denote the homomorphisms
$\mathcal{M}(G)\longrightarrow \mathcal{M}(G/N)$ and $\mathcal{M}(G/N) \longrightarrow N\cap G'$, respectively.
 Since $|N|=p$, we have $|ker \beta|=|\mathcal{M}(G/N)|/p$ which is equal
  to $|\mathcal{M}(G)|$. Now Theorem \ref{t3} implies $ker \alpha=1$ as required.
\end{proof}



The following lemma is a consequence of \cite[Corollary 2.5.3]{kar}.
\begin{lem}\label{t5}
Let $G$ be a finite $p$-group then
\[|\mathcal{M}(\frac{G}{\phi(G)})|\leq |\mathcal{M}(G)||\phi(G)\cap G'|\]
\end{lem}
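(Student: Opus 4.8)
The plan is to deduce the inequality from the version of Theorem \ref{t3} that is valid for an \emph{arbitrary} normal subgroup, which is exactly the content of \cite[Corollary 2.5.3]{kar}. One cannot apply Theorem \ref{t3} directly here, since $\phi(G)$ need not be central; instead I would use the five–term (Ganea) exact sequence attached to the extension $1\to\phi(G)\to G\to G/\phi(G)\to 1$, namely
\[\mathcal{M}(G)\longrightarrow\mathcal{M}(G/\phi(G))\longrightarrow\frac{\phi(G)}{[\phi(G),G]}\longrightarrow\frac{G}{G'}\longrightarrow\frac{G/\phi(G)}{(G/\phi(G))'}\longrightarrow 1.\]

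First I would pin down the image of the map $\mathcal{M}(G/\phi(G))\to\phi(G)/[\phi(G),G]$. By exactness it coincides with the kernel of the next arrow $\phi(G)/[\phi(G),G]\to G/G'$, which sends $x[\phi(G),G]$ to $xG'$. Since $[\phi(G),G]\subseteq\phi(G)\cap G'$ (because $[\phi(G),G]\subseteq G'$ always and $[\phi(G),G]\subseteq\phi(G)$ as $\phi(G)\trianglelefteq G$), this kernel is precisely $(\phi(G)\cap G')/[\phi(G),G]$. Thus the long sequence truncates to the exact sequence
\[\mathcal{M}(G)\longrightarrow\mathcal{M}(G/\phi(G))\longrightarrow\frac{\phi(G)\cap G'}{[\phi(G),G]}\longrightarrow 1.\]

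Finally I would read off orders. Exactness gives $|\mathcal{M}(G/\phi(G))|=|\mathrm{Im}(\mathcal{M}(G)\to\mathcal{M}(G/\phi(G)))|\cdot\big|(\phi(G)\cap G')/[\phi(G),G]\big|$, where the first factor is at most $|\mathcal{M}(G)|$ and the second is at most $|\phi(G)\cap G'|$; multiplying these two estimates yields $|\mathcal{M}(G/\phi(G))|\le|\mathcal{M}(G)|\,|\phi(G)\cap G'|$, as required. There is no real obstacle in this argument; the only point deserving attention is the non‑centrality of $\phi(G)$, which forces one to invoke the general five–term sequence of \cite[Corollary 2.5.3]{kar} rather than Theorem \ref{t3}, together with the elementary observation that $[\phi(G),G]$ already lies inside $\phi(G)\cap G'$, so that the cokernel term simplifies as above.
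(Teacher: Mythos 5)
Your argument is correct and is essentially the paper's own: the authors simply state that the lemma is a consequence of \cite[Corollary 2.5.3]{kar}, i.e.\ the five-term exact sequence for the (not necessarily central) normal subgroup $\phi(G)$, and you have just written out that deduction in full. Your identification of the relevant image as $(\phi(G)\cap G')/[\phi(G),G]$ and the subsequent order count are exactly the details the paper leaves implicit.
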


\section{Main Results}
Let $G$ be a group of order $p^n$ and $G'$ is of order $p$ and
$G/G'$ is elementary abelian. By \cite[Lemma 2.1]{ni} we have
$G=H\cdot Z(G)$ in which $H$ is an extra special $p$-group
 and $``\cdot "$ denotes the central product of groups. We know that $G'\subseteq Z(G)$,
  now depending on the structure of $Z(G)$ and the way that $G'$ embeds in $Z(G)$ the
  structure of $G$ may be simplified as the following theorem asserts.

\begin{thm}\label{t6}
Let $|G|=p^n$ and $Z(G)$ is not cyclic then
\begin{itemize}
\item[(1)] if for some $K$, $Z(G)=G'\oplus K$ then $G=H\times K$;
\item[(2)] if $G'$ is not a direct summand of $Z(G)$ then
$G=\big(H\cdot \mathbb{Z}_{p^t}\big)\times K$ in which
$Z(G)=\mathbb{Z}_{p^{t+1}}\oplus K$ and $G'\subseteq
\mathbb{Z}_{p^{t+1}}$.
\end{itemize}
\end{thm}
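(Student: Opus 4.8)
The starting point is the given decomposition $G = H \cdot Z(G)$, where $H$ is extra special and the central product amalgamates $H' = G'$ inside $Z(G)$. Since $Z(G)$ is abelian and non-cyclic, I would first analyze how the order-$p$ subgroup $G'$ sits inside $Z(G)$ by invoking the structure theorem for finite abelian $p$-groups.

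For part (1), assume $Z(G) = G' \oplus K$ for some subgroup $K$. The plan is to show that $H$ and $K$ generate $G$, that $H \cap K = 1$, and that $[H,K] = 1$, so that the central product $H \cdot Z(G)$ collapses to an honest direct product $H \times K$. Indeed $K \le Z(G)$ gives $[H,K]=1$ and $G = H Z(G) = H G' K = H K$ since $G' = H' \le H$; and $H \cap K \le H \cap Z(G) = G'$ (using that $H$ is extra special, so $Z(H) = H' = G'$), while $H \cap K \le G' \cap K = 1$ by the direct-sum hypothesis. This yields $G = H \times K$.

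For part (2), suppose $G'$ is not a direct summand of $Z(G)$. Writing $Z(G)$ in invariant-factor (or primary) form, the failure of $G'$ to split off forces $G'$ to lie inside a cyclic summand of order at least $p^2$; choose a decomposition $Z(G) = \mathbb{Z}_{p^{t+1}} \oplus K$ with $t \ge 1$ and $G' \subseteq \mathbb{Z}_{p^{t+1}}$ (necessarily $G'$ is the unique subgroup of order $p$ in that cyclic factor). Now set $L = H \cdot \mathbb{Z}_{p^{t+1}}$, the central product amalgamating $G' = H'$ with the order-$p$ subgroup of $\mathbb{Z}_{p^{t+1}}$; this is the group denoted $H \cdot \mathbb{Z}_{p^t}$ in the statement (the subscript reflecting the "new" cyclic part adjoined). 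As in part (1), $K$ is central, meets $L$ trivially (since $L \cap Z(G) \subseteq \mathbb{Z}_{p^{t+1}}$ and $\mathbb{Z}_{p^{t+1}} \cap K = 1$), and together with $L$ generates $G$, giving $G = \big(H \cdot \mathbb{Z}_{p^{t+1}}\big) \times K$.

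The main obstacle I anticipate is bookkeeping rather than depth: one must justify that the abelian group $Z(G)$ really does decompose as claimed in case (2) — i.e. that non-splitting of the order-$p$ subgroup $G'$ is equivalent to $G'$ being contained in (the socle of) a cyclic direct factor of order $\ge p^2$ which can be chosen to complement the rest — and one must be careful that $H \cap Z(G) = G'$ exactly, which relies on $H$ being extra special with centre precisely $G'$. A secondary point worth stating cleanly is the identification of the central product $H \cdot \mathbb{Z}_{p^{t+1}}$ with the object written in the theorem, so that the notation $H\cdot\mathbb{Z}_{p^t}$ is unambiguous. Once these are pinned down, both parts follow from the standard recognition criterion for internal direct products.
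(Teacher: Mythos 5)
Your proposal is correct and takes essentially the same route as the paper, whose entire proof is the one-line observation that $G=H\cdot Z(G)$ with $H\cap Z(G)=G'$ (plus ``similar'' for part (2)); you are simply supplying the direct-product recognition details and the decomposition of the abelian group $Z(G)$ that the paper leaves implicit. The notational point you flag is real --- for the orders to match, the central factor in part (2) must be the central product $H\cdot\mathbb{Z}_{p^{t+1}}$ (of order $|H|p^{t}$), and your reading of the paper's symbol $H\cdot\mathbb{Z}_{p^{t}}$ is the correct one.
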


\begin{proof}$(i)$ Since $G=H\cdot Z(G)$ and $H\cap Z(G)=G'$, so $G=H\times
K$.

$(ii)$ The proof is similar to the pervious part.
\end{proof}

The main theorem of this paper is

\begin{thm}\label{t7}
Let $G=H\cdot Z(G)$ be the group as above, then $G$ is capable if
and only if $H$ is capable and $G'$ is a direct summand of $Z(G)$.
\end{thm}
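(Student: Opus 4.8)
The plan is to argue in both directions using the epicenter criterion (Theorem \ref{t1}) together with the multiplier formulas in Theorems \ref{t2} and \ref{t3} and Lemma \ref{l1}. First I would reduce to the two structural cases supplied by Theorem \ref{t6}. In case (1), $G = H \times K$ with $K$ a nontrivial elementary-abelian-or-arbitrary abelian complement to $G'$ in $Z(G)$; here I expect $G$ to be capable exactly when $H$ is capable, and capability of the direct factor $H$ together with capability of $K$ (abelian groups that are direct sums of $\geq 2$ cyclic factors are capable, or $K$ may be handled directly) should give capability of the product — this is where I would invoke a product-type capability statement, reconstructed from Theorem \ref{t2} by computing that $Z^*(H \times K)$ is trivial when both $Z^*(H)$ and $Z^*(K)$ are. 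In case (2), $G' $ is not a direct summand of $Z(G)$, and I would show $G$ is \emph{not} capable by exhibiting a nontrivial element of $Z^*(G)$; the natural candidate is the order-$p$ subgroup $N \subseteq \mathbb{Z}_{p^{t+1}} \cap G' $ sitting at the ``bottom'' of the cyclic factor $\mathbb{Z}_{p^{t+1}}$, and I would verify $N \subseteq Z^*(G)$ by checking the hypothesis of Lemma \ref{l1}, i.e. that passing to $G/N$ multiplies the order of the Schur multiplier by exactly $p$.

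Concretely, for the ``only if'' direction I would argue the contrapositive twice: if $H$ is not capable, then since $G$ maps onto $H$ with central kernel contained in $G'$ — wait, more carefully, $H$ is a retract or quotient depending on the case — I would use that $Z^*(H) \neq 1$ forces, via functoriality of the multiplier map in Theorem \ref{t1}, a nontrivial epicenter in $G$; in the direct product case this is immediate, and in case (2) one replays the same computation with $H \cdot \mathbb{Z}_{p^t}$ in place of $H$. If instead $G'$ is not a direct summand of $Z(G)$, the Lemma \ref{l1} computation above applies. For the ``if'' direction, assume $H$ is capable and $G = H \times K$ as in Theorem \ref{t6}(1); compute $\mathcal{M}(G)$ via Theorem \ref{t2} as $\mathcal{M}(H) \oplus \mathcal{M}(K) \oplus (H/H' \otimes K)$, and show that for any nontrivial central $N$ the induced map on multipliers fails to be injective — equivalently, locate a surviving multiplier element killed by $G \to G/N$ — using Theorem \ref{t3} to track the cokernel $N \cap G' = N \cap G'$; since $G' = H'$ here and $G'$ is complemented, one arranges $N$ to meet $G'$ or not and in both subcases produces nontriviality.

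The main obstacle I anticipate is the ``if'' direction in the presence of the central product structure: computing $\mathcal{M}(H \cdot \mathbb{Z}_{p^t})$ and the behavior of its natural map to $\mathcal{M}$ of proper central quotients is not covered by the clean Künneth-type formula of Theorem \ref{t2}, since a central product is not a direct product. I would try to circumvent this by first proving (or citing) that $Z^*(H \cdot \mathbb{Z}_{p^t}) = 1$ when $H$ is capable — reducing via Theorem \ref{t1} to a statement about $\mathcal{M}(H \cdot \mathbb{Z}_{p^t}) \to \mathcal{M}(H \cdot \mathbb{Z}_{p^t}/N)$ — and only then tensoring with the abelian complement $K$. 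If that direct approach is too delicate, the fallback is to handle the extra-special-times-abelian base case $H \times K$ completely (where Theorem \ref{t2} applies cleanly and known results on capability of extra-special $p$-groups can be quoted), and then observe that Theorem \ref{t6}(2) is exactly the non-capable case, so that the only genuinely new verification needed is the Lemma \ref{l1} computation showing $|\mathcal{M}(G/N)| = p\,|\mathcal{M}(G)|$ for the distinguished subgroup $N$.
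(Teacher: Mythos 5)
Your overall strategy---decompose via Theorem~\ref{t6}, test capability through Theorem~\ref{t1}, and invoke Lemma~\ref{l1} in the non-complemented case---matches the paper's in outline, but two essential ingredients are missing, and you have flagged one of them yourself without resolving it. First, the reduction that makes the capable case tractable: since $G/G'$ is elementary abelian of rank $n-1\geq 2$, it is capable, so $Z^*(G)\subseteq G'$, which has order $p$; hence capability of $G$ is equivalent to the \emph{single} statement $G'\not\subseteq Z^*(G)$, i.e.\ to non-injectivity of $\mathcal{M}(G)\to\mathcal{M}(G/G')$. Your plan instead proposes to establish non-injectivity of $\mathcal{M}(G)\to\mathcal{M}(G/N)$ for every nontrivial central $N$; besides being far more work (there are many order-$p$ subgroups of $Z(G)$, including diagonal ones meeting $G'$ trivially, for which the Theorem~\ref{t3} sequence gives surjectivity and you would need a strict drop in multiplier order), you never actually carry out any of these checks. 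Relatedly, your appeal to ``a product of capable groups is capable'' cannot close the if-direction, because $K$ may be cyclic and hence non-capable while $H\times K$ is still capable when $H$ is; the paper's one-line comparison $|\mathcal{M}(G)|=p\,|\mathcal{M}(G/H')|$ via Theorem~\ref{t2}, using $|\mathcal{M}(H)|=p^2$ for the exponent-$p$ extraspecial group of order $p^3$, handles every $K$ uniformly.

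Second, the obstacle you name at the end---computing $|\mathcal{M}(H\cdot\mathbb{Z}_{p^t})|$, which the K\"unneth-type formula of Theorem~\ref{t2} does not cover---is genuinely the crux of the non-complemented case, and your proposal leaves it open. The paper resolves it not by any product formula but by a squeeze: when $Z(G)$ is cyclic one has $\phi(G)=G'$, so Lemma~\ref{t5} gives $p^{\frac{1}{2}(n-1)(n-2)-1}\leq|\mathcal{M}(G)|$, while the bounds of \cite{ni,ni1} force equality, $|\mathcal{M}(G)|=p^{\frac{1}{2}(n-1)(n-2)-1}$. This simultaneously settles the cyclic-center case directly (the Theorem~\ref{t3} sequence becomes exact on the left, so $G'\subseteq Z^*(G)$) and supplies exactly the multiplier order needed to apply Lemma~\ref{l1} in the case that $G'$ is not a direct summand. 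Note also that the cyclic-center case falls outside Theorem~\ref{t6} entirely and your case division never mentions it. As written, the proposal is a fair map of where the difficulties lie, but it does not contain a proof of either direction.
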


The proof of the Main Theorem is partitioned into some cases as
follows. From now on $G$ is of order $p^n$ and $G/G'$ is elementary
abelian of order $p^{n-1}$.

\begin{thm}\label{t8}
Let $G$ be as above and $Z(G)$ is cyclic then $G$ is not capable.
\end{thm}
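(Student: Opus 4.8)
The plan is to exploit Lemma~\ref{l1} together with the explicit description of $G$ coming from the preliminary material. Since $G/G'$ is elementary abelian of order $p^{n-1}$ and $Z(G)$ is cyclic, the decomposition $G=H\cdot Z(G)$ with $H$ extra special forces $Z(G)$ to be a single cyclic $p$-group $\mathbb{Z}_{p^t}$ containing $G'$, with $G'$ the unique subgroup of order $p$ in $Z(G)$. Concretely $G\cong H\cdot \mathbb{Z}_{p^t}$ (case (2) of Theorem~\ref{t6} with $K$ trivial); write $N=G'$, the socle of $Z(G)$. The goal is to show $N\subseteq Z^*(G)$, i.e.\ $G$ is not capable, and by Lemma~\ref{l1} it suffices to prove the numerical identity $|\mathcal{M}(G/N)|=p\,|\mathcal{M}(G)|$.

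First I would compute $G/N$. Since $N=G'$, we have $G/N=G/G'$, which is elementary abelian of rank $n-1$; its Schur multiplier is $\mathbb{Z}_p^{\binom{n-1}{2}}$, so $|\mathcal{M}(G/N)|=p^{\binom{n-1}{2}}$. Next I would compute $|\mathcal{M}(G)|$. Here the known structure of $H$ does the work: $H$ is extra special of order $p^{1+2m}$ for some $m$ with $n=2m+t$ (so $n-1=2m+t-1$), and the Schur multipliers of extra special $p$-groups are classical (e.g.\ via the results recorded in \cite{kar}): one has $|\mathcal{M}(H)|=p^{2m^2-m-1}$ in the generic case, with the small exceptional extra special groups ($p=2$, or $p^3$ of exponent $p^2$ etc.) handled separately. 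To pass from $H$ to $G=H\cdot \mathbb{Z}_{p^t}$ I would use Theorem~\ref{t3} applied to the central subgroup $Z=\mathbb{Z}_{p^{t-1}}$ (the unique subgroup with $G/Z\cong H$), or alternatively build $G$ by repeatedly adjoining central $\mathbb{Z}_p$'s to $H$ and track the multiplier order at each step using Theorem~\ref{t3} and the fact that $Z\cap G'$ is forced to have order $p$ at the top step only; combined with Theorem~\ref{t5} controlling $|\mathcal{M}(G/\phi(G))|$ this pins $|\mathcal{M}(G)|$ down to exactly one value. Carrying out the bookkeeping should give $|\mathcal{M}(G)|=p^{\binom{n-1}{2}-1}$, whence $|\mathcal{M}(G/N)|=p\,|\mathcal{M}(G)|$ exactly, and Lemma~\ref{l1} finishes the argument.

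The main obstacle is the exact computation of $|\mathcal{M}(G)|$: the formula for the multiplier of a central product $H\cdot \mathbb{Z}_{p^t}$ is not as clean as Theorem~\ref{t2}'s direct-product formula, so one must argue through the exact sequences of Theorem~\ref{t3} (and possibly \cite[Corollary 2.5.3]{kar} as packaged in Lemma~\ref{t5}) and rule out the a priori possibility that the multiplier is smaller than claimed. A secondary nuisance is the finite list of small exceptional extra special $p$-groups, for which $|\mathcal{M}(H)|$ deviates from the generic formula; these must be checked by hand, but in each such case $n$ is small and the inequality/identity can be verified directly, so they do not affect the conclusion. Once the identity $|\mathcal{M}(G/G')|=p\,|\mathcal{M}(G)|$ is in hand, non-capability is immediate from Lemma~\ref{l1} since $G'\subseteq Z(G)\cap G'$ has order $p$.
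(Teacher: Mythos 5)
Your overall strategy matches the paper's in outline: reduce everything to the single numerical identity $|\mathcal{M}(G/G')| = p\,|\mathcal{M}(G)|$ and then conclude via Lemma~\ref{l1} (equivalently, exactness of $1\to\mathcal{M}(G)\to\mathcal{M}(G/G')\to G'\to 1$ together with Theorem~\ref{t1}). The half of that identity reachable with the quoted tools is the lower bound: since $G/G'$ is elementary abelian, $\phi(G)=G'$, and Lemma~\ref{t5} gives $p^{\frac{1}{2}(n-1)(n-2)}=|\mathcal{M}(G/G')|\le p\,|\mathcal{M}(G)|$. The genuine gap is the opposite inequality $|\mathcal{M}(G)|\le p^{\frac{1}{2}(n-1)(n-2)-1}$, which is the whole content of the theorem. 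You propose to obtain it by ``tracking the multiplier order'' through Theorem~\ref{t3} while building $G$ up from the extraspecial group $H$ by central extensions, but Theorem~\ref{t3} runs in the wrong direction for this: the exact sequence $\mathcal{M}(G)\to\mathcal{M}(G/Z)\to Z\cap G'\to 1$ only bounds $|\mathcal{M}(G/Z)|$ above by $|\mathcal{M}(G)|\,|Z\cap G'|$; it places no upper bound whatsoever on $|\mathcal{M}(G)|$, since the first map may have arbitrarily large kernel. (One would need the longer Ganea-type sequence with a term $(G/G')\otimes Z$ on the left, which the paper does not quote, and even that does not by itself pin down the exact value.) Note also that you describe the danger as the multiplier being ``smaller than claimed''; smaller is harmless, being excluded by Lemma~\ref{t5} --- it is the possibility of a \emph{larger} multiplier that must be excluded.

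The paper closes this gap by importing the upper bound from the Main Theorems of \cite{ni,ni1}: $|\mathcal{M}(G)|\le p^{\frac{1}{2}(n-1)(n-2)+1}$ for any non-abelian group of order $p^n$, and the groups attaining the two top values $p^{\frac{1}{2}(n-1)(n-2)+1}$ and $p^{\frac{1}{2}(n-1)(n-2)}$ are classified there and all have non-cyclic center. This is the only place where the hypothesis that $Z(G)$ is cyclic enters, and your sketch never uses that hypothesis in the computation of $|\mathcal{M}(G)|$ --- a warning sign that something essential is missing, since for $Z(G)$ non-cyclic the conclusion can fail (Theorem~\ref{t9}). Your proposed case-by-case treatment of the small exceptional extraspecial groups becomes unnecessary once those classification results are invoked.
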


\begin{proof}
Since $G/G'$ is elementary abelian $p$-group we have $\phi(G)=G'$.
Now using Lemma \ref{t5} and Main Theorems of \cite{ni,ni1}, we have
\[p^{\frac{1}{2}(n-1)(n-2)-1}\leq |\mathcal{M}(G)|\leq p^{\frac{1}{2}(n-1)(n-2)+1}.\]
Again using Main Theorems of \cite{ni,ni1}, we deduce that
\[|\mathcal{M}(G)|=p^{\frac{1}{2}(n-1)(n-2)-1},\]
so the following sequence is exact
\[1\longrightarrow \mathcal{M}(G)\longrightarrow \mathcal{M}(\frac{G}{G'})\longrightarrow G'\longrightarrow 1\]
which shows $G'\subseteq Z^*(G)$.
\end{proof}

\begin{thm}\label{t9}
Let $G$ be as above and $G'$ be a direct summand of $Z(G)$ then $G$ is capable if and only if $H$ is capable.
\end{thm}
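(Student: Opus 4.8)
The setup is $G = H \cdot Z(G)$ with $G' \le H$ extra special, $|G'| = p$, $G/G'$ elementary abelian of order $p^{n-1}$, and now $G' $ a direct summand of $Z(G)$, say $Z(G) = G' \oplus K$. By Theorem 3.6(1) this gives $G = H \times K$, so $K$ is an elementary abelian $p$-group (it is a subgroup of $G/G'$ up to the splitting, or more directly $K \le Z(G)$ and $K^p \le G^p \le G' $ forces $K^p = 1$). The plan is to reduce capability of $G$ to capability of $H$ via the product decomposition $G = H \times K$ together with the multiplicativity of the Schur multiplier (Theorem 2.2) and the epicenter criterion (Theorem 2.1, Theorem 2.3, Lemma 2.4).

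First I would handle the easy direction. If $G = H \times K$ is capable, I want $H$ capable. The cleanest route: capability is inherited by direct factors when the complementary factor has trivial... no — that is false in general. Instead I would argue with the epicenter directly. One knows $Z^*(H \times K) \supseteq Z^*(H) \times Z^*(K)$ is not quite what is needed; rather I would use that for a direct product $Z^*(A\times B)$ projects onto $Z^*(A)$ when $B$ is suitably small, but the safe and standard fact here is: $H \times K$ capable with $K$ elementary abelian of rank $d$ forces $H$ capable, because $K$ being elementary abelian is itself capable (a nontrivial elementary abelian group is capable iff its rank is $\ge 2$, but even rank $1$ is handled by absorbing it), and a direct product of a capable group with a capable group is capable — the converse being what we must watch. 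I expect the honest argument is: $Z^*(G) = 1$; using $G/G' = H/H' \times K$ and Theorem 2.3 applied to $N = $ a complement issue, one shows $Z^*(H)$ injects into $Z^*(G) = 1$. Concretely, if $1 \ne z \in Z^*(H)$, then by Theorem 2.1 the map $M(H) \to M(H/\langle z\rangle)$ is not injective; tensoring the Künneth-type formula of Theorem 2.2 for $H \times K$ and $(H/\langle z \rangle) \times K$, and using that $H/H' \otimes K \cong (H/\langle z\rangle)/(H/\langle z\rangle)' \otimes K$ since $z \in H'$ — wait, $z$ need not lie in $H'$. Here I must be careful: $Z^*(H) \le Z(H)$ but $Z^*(H)$ may meet $H/H'$ nontrivially only if $H$ is not extra special of the relevant type; since $H$ is extra special, $Z(H) = H' = Z^*(H)$ or $1$, so actually $Z^*(H) \subseteq H' = G'$, which simplifies everything. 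So the real content is: $G' \subseteq Z^*(G)$ iff $G' \subseteq Z^*(H)$, and for extra special $H$ either $Z^*(H) = 1$ (capable) or $Z^*(H) = H' = G'$.

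For the main equivalence I would therefore show: $G' \subseteq Z^*(G) \iff G' \subseteq Z^*(H)$. Using Theorem 2.1, $G' \subseteq Z^*(G)$ iff $M(G) \to M(G/G')$ is a monomorphism. Now $G = H \times K$ and $G/G' = (H/H') \times K$ since $G' = H'$. Apply Theorem 2.2 to both sides:
\[
\mathcal{M}(G) \cong \mathcal{M}(H) \oplus \mathcal{M}(K) \oplus \tfrac{H}{H'} \otimes K,
\qquad
\mathcal{M}(G/G') \cong \mathcal{M}(H/H') \oplus \mathcal{M}(K) \oplus \tfrac{H}{H'} \otimes K,
\]
where I have used $(G/G')' = 1$ and $(H/H')\otimes K$ is unchanged because the natural map $H \to H/H'$ is the identity on abelianizations. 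Under these isomorphisms the natural map $M(G) \to M(G/G')$ becomes $(\mathrm{id}\text{ on last two summands}) \oplus (\text{natural map } \mathcal{M}(H) \to \mathcal{M}(H/H'))$; this requires checking functoriality/naturality of the splitting in Theorem 2.2, which is the one genuinely technical point — it does hold, as the decomposition is natural in each variable. Hence $M(G) \to M(G/G')$ is injective iff $\mathcal{M}(H) \to \mathcal{M}(H/H')$ is injective, i.e. (again by Theorem 2.1, since $H' = G'$ has order $p$ and lies in $Z(H)\cap H'$) iff $H' \subseteq Z^*(H)$, i.e. iff $H$ is \emph{not} capable. Equivalently $G' \not\subseteq Z^*(G) \iff H$ capable. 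Since under our hypothesis $G' = Z(G) \cap $ (the only obstruction — we have already ensured $G'$ is a direct summand, so no cyclic-center obstruction arises and $Z^*(G) \subseteq Z(G)$ with any nontrivial piece forced to contain $G'$), we get $Z^*(G) = 1 \iff H' \not\subseteq Z^*(H) \iff H$ capable.

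The main obstacle I anticipate is precisely the naturality claim for the Schur-multiplier decomposition of a direct product: I must confirm that the isomorphism of Theorem 2.2 is natural with respect to the quotient maps on each factor, so that the induced map on multipliers respects the three summands as claimed. If that naturality were to fail on the cross term $\frac{H}{H'}\otimes K$, the argument would need the more careful five-term exact sequence (Theorem 2.3) bookkeeping instead. A secondary point is verifying that no \emph{other} central subgroup can lie in $Z^*(G)$: since $Z(G) = G' \oplus K$ with $K$ elementary abelian and $G = H \times K$, and $K$ (if nontrivial of rank $\ge 1$) can be shown not to meet $Z^*(G)$ using that $\mathcal{M}(G) \to \mathcal{M}(G/L)$ stays injective for $L \le K$ — this follows from the same Künneth computation since passing to $G/L = H \times (K/L)$ only shrinks the last summand $\frac{H}{H'} \otimes K$ in a controlled, injective-on-the-kernel way, or more simply from the known capability of elementary abelian direct factors. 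Assembling these pieces yields the stated equivalence.
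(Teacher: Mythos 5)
Your main line of argument is correct, but it takes a genuinely different route from the paper. The paper decomposes $G=H\times K$ and then runs a case analysis on the isomorphism type of the extra special group $H$ (order $p^3$ of exponent $p$, order $p^3$ of exponent $p^2$, order $p^{2m+1}$ with $m>1$, and $p=2$ separately), feeding the known orders of $\mathcal{M}(H)$ into the Künneth formula to compare $|\mathcal{M}(G)|$ with $|\mathcal{M}(G/H')|$, and then reading off injectivity of $\mathcal{M}(G)\to\mathcal{M}(G/H')$ from the five-term sequence. You instead invoke the \emph{naturality} of the decomposition $\mathcal{M}(A\times B)\cong\mathcal{M}(A)\oplus\mathcal{M}(B)\oplus \frac{A}{A'}\otimes\frac{B}{B'}$ with respect to the quotient $H\to H/H'$ (identity on $K$), so that $\ker\big(\mathcal{M}(G)\to\mathcal{M}(G/G')\big)=\ker\big(\mathcal{M}(H)\to\mathcal{M}(H/H')\big)$, and then apply Theorem \ref{t1} to $H$ itself. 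That naturality does hold (the splitting comes from the retractions onto the factors and is functorial in each variable), and your route buys a uniform proof: it needs no knowledge of which extra special groups are capable, no values of $|\mathcal{M}(H)|$, and no separate treatment of $p=2$; it only uses $Z(H)=H'$ of order $p$, so $Z^*(H)$ is $1$ or $H'$. Both proofs rest on the unproved assertion $Z^*(G)\subseteq G'$ (the paper just writes ``$Z^{\wedge}(G)\subseteq G'$''), and here your ``secondary point'' contains a slip: you say $L\leq K$ fails to meet $Z^*(G)$ because $\mathcal{M}(G)\to\mathcal{M}(G/L)$ ``stays injective,'' but by Theorem \ref{t1} injectivity would prove the \emph{opposite}, namely $L\subseteq Z^*(G)$; what actually happens is that passing to $H\times(K/L)$ strictly shrinks the summands $\mathcal{M}(K)$ and $\frac{H}{H'}\otimes K$, so the map is \emph{not} injective. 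The cleanest repair, which also covers diagonal subgroups of $Z(G)=G'\oplus K$ rather than only subgroups of $K$, is to note that $G/G'$ is elementary abelian of rank $n-1\geq 2$, hence capable, and the epicenter is contained in every normal subgroup with capable quotient, giving $Z^*(G)\subseteq G'$ at once. With that fix your argument is complete.
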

\begin{proof}
Theorem \ref{t6} shows in this case $G=H\times K$ where $Z(G)=G'\oplus K$. Remember that we have
 $Z^{^{\wedge}}(G)\subseteq G'=H'$. Now depending on the capability of $H$ we have the following cases
\begin{itemize}
\item[(1)] $H$ is capable.
\item[(2)] $H$ is not capable.
\end{itemize}
    In case (1), for $p\neq 2$, $H$ is the extra special $p$-group of order $p^3$ and exponent $p$ and that $|\mathcal{M}(H)|=p^2$.
     If we show that $H'\nsubseteq Z^*(G)$ we have done. To do this we use Theorems \ref{t1} and \ref{t3}.
     The sequence
    \[\mathcal{M}(G)\longrightarrow \mathcal{M}(\frac{G}{H'})\longrightarrow H'\longrightarrow 1\]
    is exact, but using Theorem \ref{t2} we have $|\mathcal{M}(G)|=p~|\mathcal{M}(G/H')|$ so
    \[1\neq |ker(\mathcal{M}(G)\longrightarrow \mathcal{M}(G/H'))|\] and the result holds.
 For $p=2$, $G$ isomorphic to dihedral group of order $8$, and a similar technique shows the result.

 In case (2), $H$ can be either an extra special $p$-group of order $p^3$ and exponent $p^2$,
 or an extra special $p$-group of order $p^{2m+1}$ with $m>1$ which multipliers are trivial
 and of order $p^{2m^2-m-1}$, respectively. For $H$ of order $p^3$ a similar argument to
 that of the first case shows that
    \[\mathcal{M}(G)\longrightarrow \mathcal{M}(\frac{G}{H'})\]
    is injective and so $H'\subseteq Z^*(G)$. On the other hand
    if $H$ is of order $p^{2m+1}$ for $m>1$, using Theorem \ref{t2}
    the following sequence is exact, therefore $G$ is not capable.
    \[1\longrightarrow \mathcal{M}(G)\longrightarrow \mathcal{M}(\frac{G}{H'})\longrightarrow H'\longrightarrow 1\]

\end{proof}

Now the second case of Theorem \ref{t6} remains to discuss.

\begin{thm}\label{t10}
Let $G$ be as above with $Z(G)$ not cyclic and $G'$ is not a direct summand of $Z(G)$ then $G$ is not capable.
\end{thm}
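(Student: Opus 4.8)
The plan is to reduce the statement to Theorem~\ref{t8} by combining the structure theorem~\ref{t6}(2) with the direct--product formula of Theorem~\ref{t2}, and then to conclude, as in Theorem~\ref{t8}, that $G'\subseteq Z^{*}(G)$.

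First I would pin down the structure. By Theorem~\ref{t6}(2), since $Z(G)$ is non-cyclic and $G'$ is not a direct summand of it, we may write $G=L\times K$, where $L=H\cdot C$ is the central product of an extra special group $H$ of order $p^{2s+1}$ with a cyclic subgroup $C\le Z(G)$ containing $G'$, and $Z(G)=C\oplus K$. The image of $C$ in the elementary abelian group $G/G'$ is cyclic of order $|C|/p$, which forces $|C|=p^{2}$; hence $G'=L'=H'$, $Z(L)=C\cong\mathbb{Z}_{p^{2}}$ is cyclic, $L/L'$ is elementary abelian of rank $2s+1$, and (comparing with $G/G'=(L/L')\times K$) $K$ is elementary abelian of some rank $k$ with $n-1=(2s+1)+k$.

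Thus $L$ is a group of order $p^{2s+2}$ with $|L'|=p$, $L/L'$ elementary abelian of order $p^{2s+1}$ and $Z(L)$ cyclic, so Theorem~\ref{t8} applies to $L$ and its proof gives $|\mathcal{M}(L)|=p^{\binom{2s+1}{2}-1}$. By Theorem~\ref{t2},
\[
|\mathcal{M}(G)|=|\mathcal{M}(L)|\cdot|\mathcal{M}(K)|\cdot\Bigl|\tfrac{L}{L'}\otimes\tfrac{K}{K'}\Bigr|=p^{\binom{2s+1}{2}-1}\cdot p^{\binom{k}{2}}\cdot p^{(2s+1)k},
\]
and since $\binom{n-1}{2}=\binom{2s+1}{2}+(2s+1)k+\binom{k}{2}$ this equals $p^{\binom{n-1}{2}-1}$. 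On the other hand $\phi(G)=G'$ because $G/G'$ is elementary abelian, so $|\mathcal{M}(G/G')|=p^{\binom{n-1}{2}}=p\,|\mathcal{M}(G)|$. Feeding this into the exact sequence $\mathcal{M}(G)\to\mathcal{M}(G/G')\to G'\to 1$ of Theorem~\ref{t3} (with $|G'|=p$) shows that $\mathcal{M}(G)\to\mathcal{M}(G/G')$ is injective, so $G'\subseteq Z^{*}(G)$ by Theorem~\ref{t1}, and therefore $G$ is not capable.

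I expect the only delicate point to be the structural reduction in the second paragraph -- in particular verifying that ``$G/G'$ elementary abelian'' forces $|C|=p^{2}$, and hence that $Z(L)$ is cyclic, which is precisely what brings $L$ under the scope of Theorem~\ref{t8}. Once that is settled, no computation is needed beyond the one already performed in the proof of Theorem~\ref{t8}, and the remainder is routine bookkeeping with Theorems~\ref{t1}--\ref{t3}.
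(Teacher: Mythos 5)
Your proposal is correct and follows essentially the same route as the paper: decompose $G$ as a direct product via Theorem~\ref{t6}(2), import the multiplier computation of Theorem~\ref{t8} for the central-product factor, combine with Theorem~\ref{t2} to get $|\mathcal{M}(G/G')|=p\,|\mathcal{M}(G)|$, and conclude via the exact sequence of Theorem~\ref{t3} (i.e.\ Lemma~\ref{l1}) that $G'\subseteq Z^{*}(G)$. Your argument is in fact slightly more explicit than the paper's, since you justify why the cyclic factor has order $p^{2}$ (so that Theorem~\ref{t8} genuinely applies to $L$) and you evaluate $\mathcal{M}(G/G')$ directly as the multiplier of an elementary abelian group rather than running Theorem~\ref{t2} a second time.
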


\begin{proof}
In this case we have $G=H \cdot \mathbb{Z}_{p^t}\times K$ for some
$K\subseteq Z(G)$. Theorem \ref{t8} shows that $|\mathcal{M}(H\cdot
\mathbb{Z}_{p^t})|=p^{1/2(t-1)(t-2)-1}$ and also $Z^*(H\cdot
\mathbb{Z}_{p^t})=(HC_{p^t})'$ is of order $p$. With the aid of
Theorem \ref{t2}, we compute the orders of the Schur multipliers of
$G$ and $G/(H\cdot \mathbb{Z}_{p^t})'$, we have
\[|\mathcal{M}(\frac{G}{(H\cdot \mathbb{Z}_{p^t})'})|=p~|\mathcal{M}(G)|.\]
Now the results follows by Lemma \ref{l1}.
\end{proof}

\end{document}